\def \[{\begin{equation}}
\def \]{\end{equation}}
\newtheorem{Theorem}{Theorem}[section]
\newtheorem{Corollary}{Corollary}[section]
\newtheorem{Lemma}{Lemma}[section]
\newtheorem{Algorithm}{Algorithm}[section]
\newtheorem{Remark}{Remark}[section]
\newtheorem{Example}{Example}[section]
\numberwithin{equation}{section}
\title{\bf On finite termination of the generalized Newton method for solving absolute value equations}
\author[a]{Jia Tang\thanks{Supported partially by the  the Natural Science Foundation of Fujian Province (Grand No. 2020J01166). Email address: tang\_jia@126.com.}}
\author[a]{Wenli Zheng\thanks{Email address: 735349940@qq.com.}}
\author[a]{Cairong Chen\thanks{Corresponding author. Supported partially by the National Natural Science Foundation of China (Grant No. 11901024) and  the Natural Science Foundation of Fujian Province (Grand No. 2021J01661). Email address: cairongchen@fjnu.edu.cn.}}
\author[b]{Dongmei Yu\thanks{Supported partially by the Ministry of Education in China of Humanities and Social Science Project (Grand No. 21YJCZH204), the Natural Science Foundation of Liaoning Province (Grand Nos. 2020-MS-301, LJ2020ZD002) and the Youth Talent Entrustment Project of Liaoning Provincial Federation Social Science Circles (Grand No. 2022lslwtkt-069). Email: {yudongmei1113@163.com}.}}
\author[c]{Deren Han\thanks{Supported partially by the National Natural Science Foundation of China (Grant Nos. 12131004 and 11625105). Email: {handr@buaa.edu.cn}.}}
\affil[a]{School of Mathematics and Statistics, FJKLMAA and Center for Applied Mathematics of Fujian Province, Fujian Normal University, Fuzhou, 350007, P.R. China}
\affil[b]{Institute for Optimization and Decision Analytics, Liaoning Technical University, Fuxin, 123000, P.R. China}
\affil[c]{LMIB of the Ministry of Education, School of Mathematical Sciences, Beihang University, Beijing 100191, P.R. China}
\begin{document}
\maketitle
\begin{quote}
{\bf Abstract:}  Motivated by the framework constructed by Brugnano and Casulli $[$SIAM J. Sci. Comput. 30: 463--472, 2008$]$, we analyze the finite termination property of the generalized Netwon method (GNM) for solving the absolute value equation (AVE). More precisely, for some special matrices, GNM is terminated in at most $2n + 2$ iterations. A new result for the unique solvability and unsolvability of the AVE is obtained. Numerical experiments are given to demonstrate the theoretical analysis.

{\small

\medskip
{\em Keywords}.
 Absolute value equation; the generalized Newton method; finite termination; unique solvability; unsolvability}

\end{quote}

\section{Introduction}
In this paper, we consider the following absolute value equation~(AVE):
\begin{equation}\label{eq:ave}
    Ax-|x|-b=0,
\end{equation}
where $A\in \mathbb{R}^{n\times n},\, b\in \mathbb{R}^n$ and $|x|=(|x_1|,\cdots,|x_n|)^\top$ represents the componentwise absolute value of the unknown vector $x$. Recently, AVE~\eqref{eq:ave} has attracted more and more attention in the optimization community since its relevance to many mathematical programming problems, including the linear complementarity problem, the bimatrix game and others (see, for instance, \cite{mang2007,mame2006,prok2009} and the references therein).

In general, solving AVE~\eqref{eq:ave} is NP-hard \cite{mang2007}. In addition, if AVE~\eqref{eq:ave} is solvable, checking whether AVE~\eqref{eq:ave} has a unique solution or multiple solutions is NP-complete~\cite{prok2009}. Nevertheless, some necessary or sufficient conditions for the unique solvability of AVE~\eqref{eq:ave} have been constructed, see, for example, \cite{mezzadri2020,mame2006,rohn2014,zhang2009,wu2018,wu2021,hladik2022,wu2016} and the references therein. In particular, one of the known sufficient conditions for the unique solvability of AVE~\eqref{eq:ave} is described in Lemma~\ref{lem:sc}.

\begin{Lemma}[\cite{mame2006}]\label{lem:sc}
AVE~\eqref{eq:ave} is uniquely solvable for any $b\in \mathbb{R}^n$ if $\|A^{-1}\|<1$.
\end{Lemma}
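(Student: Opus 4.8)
The plan is to recast AVE~\eqref{eq:ave} as a fixed-point equation and invoke the Banach contraction principle. Note first that the hypothesis $\|A^{-1}\|<1$ already presupposes $A$ nonsingular, so we may left-multiply \eqref{eq:ave} by $A^{-1}$ and observe that $x$ solves \eqref{eq:ave} if and only if $x=G(x):=A^{-1}\big(|x|+b\big)$. It therefore suffices to show that $G:\mathbb{R}^n\to\mathbb{R}^n$ has exactly one fixed point, for then that fixed point is the unique solution of \eqref{eq:ave}, and the argument is valid for every right-hand side $b$.

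The key estimate is that $G$ is a contraction. For any $x,y\in\mathbb{R}^n$ we have $G(x)-G(y)=A^{-1}\big(|x|-|y|\big)$, hence $\|G(x)-G(y)\|\le\|A^{-1}\|\,\big\|\,|x|-|y|\,\big\|$. The elementary reverse-triangle inequality $\big||x_i|-|y_i|\big|\le|x_i-y_i|$ holds componentwise, so for the norms intended here (any $\ell_p$-norm, or more generally any monotone norm) one gets $\big\|\,|x|-|y|\,\big\|\le\|x-y\|$; combining, $\|G(x)-G(y)\|\le\|A^{-1}\|\,\|x-y\|$ with Lipschitz constant $\|A^{-1}\|<1$. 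Since $(\mathbb{R}^n,\|\cdot\|)$ is complete, the Banach fixed-point theorem delivers a unique $x^\ast$ with $G(x^\ast)=x^\ast$, which is the desired conclusion.

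The only genuine subtlety is the inequality $\big\|\,|x|-|y|\,\big\|\le\|x-y\|$: it is immediate for the standard norms but in general needs the underlying vector norm to be absolute/monotone. If one wishes to isolate the hypothesis more cleanly, uniqueness can be argued directly — if $x_1,x_2$ both solve \eqref{eq:ave}, then $x_1-x_2=A^{-1}\big(|x_1|-|x_2|\big)$ gives $\|x_1-x_2\|\le\|A^{-1}\|\,\|x_1-x_2\|$, forcing $x_1=x_2$ — while existence can alternatively be obtained from a homotopy/degree argument for $H(x,t)=Ax-t|x|-b$, $t\in[0,1]$: the a priori bound $\|x\|\le\|A^{-1}\|\,\|b\|/(1-\|A^{-1}\|)$ on any zero of $H(\cdot,t)$ confines all such zeros to a fixed ball, and since $H(\cdot,0)=A\,(\cdot)-b$ has a single nondegenerate zero, homotopy invariance of the topological degree produces a zero of $H(\cdot,1)$. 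I expect the contraction route to be the one actually used, with the componentwise bound on $|x|-|y|$ being the crux of the proof.
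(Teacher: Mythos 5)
Your argument is correct, but note that the paper does not prove Lemma~\ref{lem:sc} at all: it is quoted from Mangasarian and Meyer \cite{mame2006}, so there is no in-paper proof to match, and your route also differs from the one in that reference. There the hypothesis is phrased as the singular values of $A$ exceeding $1$ (equivalent to $\|A^{-1}\|<1$ in the spectral norm used here); uniqueness is obtained from the estimate $\|A(x-y)\|=\big\||x|-|y|\big\|\le\|x-y\|$ contradicting $\sigma_{\min}(A)>1$ unless $x=y$, and existence is obtained nonconstructively through the reformulation of the AVE as a linear complementarity problem together with solvability results for LCPs with positive definite matrices. Your Banach fixed-point argument for $G(x)=A^{-1}(|x|+b)$ handles existence and uniqueness in one stroke, is self-contained, and moreover shows that the Picard iteration $x^{k+1}=A^{-1}(|x^k|+b)$ converges linearly with rate $\|A^{-1}\|$ from any starting point, which is a small bonus in the context of this paper's iterative methods. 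Your caveat about monotone norms is moot here: the paper fixes $\|A^{-1}\|$ to be the spectral norm, which is induced by the Euclidean vector norm, and for that norm the componentwise bound $\big||x_i|-|y_i|\big|\le|x_i-y_i|$ does yield $\big\||x|-|y|\big\|\le\|x-y\|$, so the contraction estimate is airtight; the alternative degree-theoretic existence argument you sketch is also sound but unnecessary once the contraction is in place.
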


In the past few years, many numerical methods have been proposed to solve the unique solution of AVE~\eqref{eq:ave}. For example, the generalized Newton method (GNM)  \cite{mang2009} and its extensions \cite{caqz2011,crfp2016,wacc2019}, the SOR-like iterative method \cite{kema2017,guwl2019}, the concave minimization method \cite{zamani2021,mang2007,mang2007b}, the Levenberg-Marquardt method \cite{iqia2015}, the generalized Gauss-Seidel iterative method \cite{edhs2017}, the exact and inexact Douglas-Rachford splitting methods \cite{chen2022} and others, see, e.g.,  \cite{raam2019,guhl2017,yuch2020,ke2020,sayc2018, maer2018,cyyh2021,abhm2018} and the references therein. Most of the above mentioned methods are proved to be convergent under the condition that $\|A^{-1}\|<1$ (In this situation, it follows from Lemma~\ref{lem:sc} that AVE~\eqref{eq:ave} is uniquely solvable). Among them,  GNM often obtains the exact solution of AVE~\eqref{eq:ave} in just a few iterations, which makes it a competitive method.  Theoretically, however, GNM can only be applied to the case with more restrictions. Concretely, Mangasarian in \cite{mang2009} showed that the sequence generated by GNM is well defined and linearly convergent if $\|A^{-1}\|<\frac{1}{4}$. Then it was proved that the condition  $\|A^{-1}\|<\frac{1}{4}$ can be relaxed to $\|A^{-1}\|<\frac{1}{3}$ by the authors in \cite{lian2018,crfp2016}. Numerically, it turns out that GNM works under the less stringent condition that $\|A^{-1}\|> 0$ \cite{chen2022,mang2009}. Thus, it would be very useful to establish convergence of GNM under this assumption. This paper is devoting to solve this problem to some extend. Indeed, under mild conditions, we conclude that GNM is terminated in at most $2n+2$ iterations. We should mention that Mangasarian in \cite{mang2009} also presented the finite termination property of GNM (see Lemma~\ref{lem:termi} below), but he did not give the upper bound. Our work here is inspired by \cite{Brugnano2008}.

The rest of this paper is organized as follows. In the next section, some preliminaries are given. The finite termination property of GNM is further discussed in Section~\ref{sec:main}. Numerical experiments are reported in Section~\ref{sec:numerical}. Conclusions are made in Section~\ref{sec:conclusion}.

\textbf{Notation.} We use $\mathbb{R}^{n\times n}$ to denote the set of all $n \times n$ real matrices and $\mathbb{R}^{n}= \mathbb{R}^{n\times 1}$. $I$ is the identity matrix with suitable dimension. $\|A\|$ denotes the spectral norm of the matrix $A$. For a matrix $X$, $\mathcal{N}(X)$ is the null space of $X$. Let $\rho(A)$ denote the spectral radius of the square matrix $A$. Throughout this paper, $\mathcal{D}(x) \doteq \textbf{diag}(\textbf{sign}(x))$ with $\textbf{sign}(x)$ denoting a vector with components equal to $-1,\, 0$, or $1$, respectively, depending on whether the corresponding element in the vector $x$ is negative, zero, or positive; and for $x \in \mathbb{R}^{n}$, $\textbf{diag}(x)$ represents a diagonal matrix with $x_i$ as its diagonal entries for every $i=1,2,\cdots,n$. For a vector $v\in \mathbb{R}^n$, $\textbf{span}(v)$ denotes the linear space spanned by $v$. For $X\in\mathbb{R}^{m\times n}$, $X_{(i,j)}$ refers to its $(i,j)$th entry, $|X|$ is in
$\mathbb{R}^{m\times n}$ with its $(i,j)$th entry $|X_{(i,j)}|$.
Inequality $X\ge Y$ means $X_{(i,j)}\ge Y_{(i,j)}$ for all $(i,j)$. In particular, $X\ge
0$ means that $X$ is a nonnegative matrix.

\section{Preliminaries}
In this section, we briefly introduce some preliminaries, which lay the foundation for our later arguments.

A matrix $A\in\mathbb{R}^{n\times n}$ is called a {\em $Z$-matrix\/} if $A_{(i,j)}\le 0$ for all $i\ne j$. A $Z$-matrix $A$ is called an $M$-matrix if $A^{-1}\ge 0$ \cite{Berman1994}. The  following property can be found in \cite{Berman1994}.

\begin{Lemma}\label{thm:MMtx-2}
Let $A\in \mathbb{R}^{n\times n}$ be an $M$-matrix. Let $B\in \mathbb{R}^{n\times n}$ be a $Z$-matrix. If $B\ge A$, then $B$ is also an $M$-matrix.
\end{Lemma}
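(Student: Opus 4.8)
The plan is to reduce the statement to two classical facts about $Z$-matrices and nonnegative matrices. Recall (see \cite{Berman1994}) that any $Z$-matrix $C\in\mathbb{R}^{n\times n}$ admits a splitting $C=sI-P$ with $P\ge 0$ as soon as $s\ge\max_i C_{(i,i)}$ (the off-diagonal entries of $P$ are automatically nonnegative because $C$ is a $Z$-matrix), and that such a $C$ is a (nonsingular) $M$-matrix if and only if $s>\rho(P)$. This is the characterization I would lean on.

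First I would fix one shift $s$ that serves both matrices, say $s\ge\max_i\{A_{(i,i)},B_{(i,i)}\}$, and set $P\doteq sI-A$ and $Q\doteq sI-B$. By the choice of $s$ and the $Z$-matrix hypothesis on $A$ and $B$, both $P\ge 0$ and $Q\ge 0$. Since $A$ is an $M$-matrix, the characterization above gives $\rho(P)<s$.

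Next I would use the hypothesis $B\ge A$: it yields $Q=sI-B\le sI-A=P$, hence $0\le Q\le P$. By the Perron--Frobenius theory, the spectral radius is monotone on the cone of nonnegative matrices, so $\rho(Q)\le\rho(P)<s$. Therefore $B=sI-Q$ is a $Z$-matrix written as $sI-Q$ with $Q\ge 0$ and $\rho(Q)<s$, and applying the characterization once more shows that $B$ is an $M$-matrix, which completes the proof.

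I do not anticipate a real obstacle: the argument is essentially a two-line consequence of (i) the $sI-P$ characterization of nonsingular $M$-matrices among $Z$-matrices and (ii) the implication $0\le Q\le P\Rightarrow\rho(Q)\le\rho(P)$, both of which are standard and contained in \cite{Berman1994}. The only point needing a little care is to employ one and the same shift $s$ for $A$ and $B$ so that the comparison $B\ge A$ translates directly into $Q\le P$. As an alternative that avoids spectral radii altogether, one could instead invoke the characterization ``a $Z$-matrix $A$ is a nonsingular $M$-matrix iff there exists $x>0$ with $Ax>0$'' and note that $Bx\ge Ax>0$; but the nonnegative-matrix route above is the most transparent.
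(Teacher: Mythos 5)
Your proof is correct. Note that the paper itself offers no proof of this lemma at all: it is quoted as a known property of $M$-matrices with a citation to Berman and Plemmons, so there is no in-paper argument to compare against. Your regular-splitting route is the standard one and is sound: with a common shift $s\ge\max_i\{A_{(i,i)},B_{(i,i)}\}$ both $P=sI-A$ and $Q=sI-B$ are nonnegative, the hypothesis $B\ge A$ gives $0\le Q\le P$, monotonicity of the spectral radius on nonnegative matrices gives $\rho(Q)\le\rho(P)<s$, and the characterization of nonsingular $M$-matrices among $Z$-matrices (equivalent to the paper's definition $B^{-1}\ge 0$, since $B^{-1}=s^{-1}\sum_{k\ge 0}(Q/s)^{k}\ge 0$ when $\rho(Q)<s$) finishes the argument. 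The alternative you mention, using the characterization that a $Z$-matrix $B$ is a nonsingular $M$-matrix if and only if $Bx>0$ for some $x>0$, together with $Bx\ge Ax>0$, is equally valid and slightly shorter; either way the lemma is fully justified.
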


We
assume, throughout the rest of this paper, that either
\begin{subequations}\label{eq:condMM}
\begin{equation}\label{eq:condMM-1}
\mbox{
$A - I$ is an $M$-matrix}
\end{equation}
or
\begin{equation}\label{eq:condMM-2}
\framebox{
\parbox{8.5cm}{
$\mathcal{N}\left(A^\top-I\right) = \textbf{span}(v)$ with $v>0$ and $A - I + D$ is an $M$-matrix for all diagonal matrices $D=\textbf{diag}(d)$ such that $d\ge 0$ and $d\neq 0$.
}
}
\end{equation}
\end{subequations}

\begin{Remark}
The assumption \eqref{eq:condMM} is inspired by \cite{Brugnano2008}. In \cite{Brugnano2008}, the piecewise linear system
$$
\max\{0,x\} + Tx = c
$$
is considered, which is equivalent to
\begin{equation}\label{eq:nave}
(I + 2T)x + |x| = 2c.
\end{equation}
Obviously, AVE~\eqref{eq:nave} can be rewritten as
\begin{equation}\label{eq:ave2}
   Ax + |x| =b
\end{equation}
with $A = I + 2T$ and $b = 2c$. Note that AVE~\eqref{eq:ave2} can be reformulated as
$$A(-x) - |-x| = -b,$$
which is $Ay - |y| = -b$ with $y = -x$. Similarly, AVE~\eqref{eq:ave} can be reformulated as
$$
Ay + |y| = -b
$$
with $y = -x$. In this sense, we do not distinguish AVE~\eqref{eq:ave} and AVE~\eqref{eq:ave2} and we focus on AVE~\eqref{eq:ave} in this paper.
\end{Remark}

For the convenience of reference, we formally present GNM for AVE~\eqref{eq:ave} in Algorithm~\ref{alg}.
\begin{Algorithm}[\cite{mang2009}]\label{alg}
Assume that $x^0\in\mathbb{R}^n$ is an arbitrary initial guess. For $k=0,1,2,\cdots$ until the iterative sequence $\{x^k\}_{k=0}^\infty$ is convergent, computing $x^{k+1}$ by
\begin{equation}\label{eq:newton}
      x^{k+1} = \left[A - \mathcal{D}(x^k)\right]^\mathsf{-1}b.
\end{equation}
\end{Algorithm}

The following lemma implies that GNM~\eqref{eq:newton} is well defined when $\|A^\mathsf{-1}\| < 1$.

\begin{Lemma}[\cite{caqz2011}]\label{lem:nons}
Suppose that $\|A^\mathsf{-1}\| < 1$ and $D=\textbf{diag}(d)$ with $d_i\in[-1,1]$\,$(i=1,2,\cdots,n)$. Then, $A-D$ is nonsingular.
\end{Lemma}

The following lemma shows the finite termination property of GNM iteration~\eqref{eq:newton}. However, the upper bound of $k$ is unknown.

\begin{Lemma}[\cite{mang2009}]\label{lem:termi}
Let $\|A^\mathsf{-1}\| < 1$. If $\mathcal{D}(x^{k+1}) = \mathcal{D}(x^k)$ for some $k$ for the well defined GNM iteration~\eqref{eq:newton}, then $x^{k+1}$ solves AVE~\eqref{eq:ave}.
\end{Lemma}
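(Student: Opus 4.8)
The plan is to exploit the defining relations of the two successive iterates. By construction, $x^{k+1} = [A - \mathcal{D}(x^k)]^{-1} b$, which is equivalent to $(A - \mathcal{D}(x^k)) x^{k+1} = b$; here $\|A^{-1}\| < 1$ together with Lemma~\ref{lem:nons} guarantees that $A - \mathcal{D}(x^k)$ is invertible, so $x^{k+1}$ is well defined. The hypothesis $\mathcal{D}(x^{k+1}) = \mathcal{D}(x^k)$ lets me replace $\mathcal{D}(x^k)$ by $\mathcal{D}(x^{k+1})$ in that identity, giving
\begin{equation*}
(A - \mathcal{D}(x^{k+1})) x^{k+1} = b,
\end{equation*}
i.e. $A x^{k+1} - \mathcal{D}(x^{k+1}) x^{k+1} = b$.

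The next step is the elementary observation that $\mathcal{D}(z) z = |z|$ for every $z \in \mathbb{R}^n$: componentwise, $\textbf{sign}(z_i)\, z_i = |z_i|$, whether $z_i$ is negative, zero, or positive. Applying this with $z = x^{k+1}$ turns the displayed equation into
\begin{equation*}
A x^{k+1} - |x^{k+1}| = b,
\end{equation*}
which is precisely AVE~\eqref{eq:ave} satisfied at $x^{k+1}$. Hence $x^{k+1}$ solves the AVE, as claimed.

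There is essentially no obstacle here: the argument is a two-line substitution plus the identity $\mathcal{D}(z)z = |z|$. The only points that deserve a word of care are (i) confirming that the iteration is indeed well defined under $\|A^{-1}\| < 1$, which is exactly the content of Lemma~\ref{lem:nons} since the diagonal entries of $\mathcal{D}(x^k)$ lie in $\{-1,0,1\} \subseteq [-1,1]$, so that the inverse in \eqref{eq:newton} exists and $x^{k+1}$ is unambiguously determined; and (ii) making explicit that the equality of the two sign matrices is what licenses swapping $\mathcal{D}(x^k)$ for $\mathcal{D}(x^{k+1})$ in the relation $(A - \mathcal{D}(x^k))x^{k+1} = b$. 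Neither of these requires any real work, so the proof is short and self-contained given the lemmas already available in the excerpt.
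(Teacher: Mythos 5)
Your proof is correct and is exactly the standard argument behind this lemma (which the paper cites from Mangasarian rather than reproving): substitute $\mathcal{D}(x^{k+1})=\mathcal{D}(x^k)$ into $[A-\mathcal{D}(x^k)]x^{k+1}=b$ and use $\mathcal{D}(z)z=|z|$, with Lemma~\ref{lem:nons} ensuring well-definedness. Nothing further is needed.
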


\section{Main Results}\label{sec:main}
In this section, based on the assumption~\eqref{eq:condMM}, we will further discuss the finite termination property of GNM iteration~\eqref{eq:newton}.

\begin{Lemma}\label{lem:cond}
Let the matrix $A$ satisfy either \eqref{eq:condMM-1} or \eqref{eq:condMM-2}. If $A$ satisfies \eqref{eq:condMM-2}, assume also that $\mathcal{D}(x^0) \neq I$ and $v^\top b < 0$. Then $A - \mathcal{D}(x^k)$ is an $M$-matrix for $k = 0,1,2,\cdots$, and GNM~\eqref{eq:newton} is well defined.
\end{Lemma}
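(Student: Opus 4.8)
The plan is to analyze the two cases of assumption~\eqref{eq:condMM} separately, and in each case show that $A - \mathcal{D}(x^k)$ is a $Z$-matrix dominating an $M$-matrix, so that Lemma~\ref{thm:MMtx-2} applies. Observe first that for any $x^k$, the matrix $\mathcal{D}(x^k) = \textbf{diag}(\textbf{sign}(x^k))$ is diagonal with entries in $\{-1,0,1\}$; hence $A - \mathcal{D}(x^k)$ differs from $A$ only on the diagonal, so it inherits the sign pattern of the off-diagonal entries of $A$. Since $A - I$ is assumed to be a $Z$-matrix in case~\eqref{eq:condMM-1} (an $M$-matrix is a $Z$-matrix) — and in case~\eqref{eq:condMM-2} every $A - I + D$ with $D \geq 0$, $D \neq 0$ is an $M$-matrix, forcing $A$ to have nonpositive off-diagonal entries as well — in both cases $A - \mathcal{D}(x^k)$ is a $Z$-matrix. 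Once $M$-matrixhood is established, nonsingularity of $A - \mathcal{D}(x^k)$ is automatic, so GNM~\eqref{eq:newton} is well defined.

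For case~\eqref{eq:condMM-1}, the argument is short: since the diagonal entries of $\mathcal{D}(x^k)$ are at most $1$, we have $-\mathcal{D}(x^k) \geq -I$ entrywise, hence $A - \mathcal{D}(x^k) \geq A - I$. As $A - \mathcal{D}(x^k)$ is a $Z$-matrix and $A-I$ is an $M$-matrix, Lemma~\ref{thm:MMtx-2} gives that $A - \mathcal{D}(x^k)$ is an $M$-matrix for every $k$.

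For case~\eqref{eq:condMM-2}, the delicate point is the first iterate. We must show $\mathcal{D}(x^k) \neq I$ for all $k$, because the hypothesis only guarantees $A - I + D$ is an $M$-matrix when $D = I - \mathcal{D}(x^k) \geq 0$ is \emph{nonzero}; equivalently we need $\textbf{sign}(x^k) \neq (1,\dots,1)^\top$ for every $k$. The induction is as follows. For $k = 0$ this is exactly the standing hypothesis $\mathcal{D}(x^0) \neq I$. Suppose $\mathcal{D}(x^k) \neq I$; then $D_k := I - \mathcal{D}(x^k) \geq 0$ is nonzero, so by~\eqref{eq:condMM-2} the matrix $A - I + D_k = A - \mathcal{D}(x^k)$ is an $M$-matrix, hence $(A - \mathcal{D}(x^k))^{-1} \geq 0$ and $x^{k+1} = (A - \mathcal{D}(x^k))^{-1} b$ is well defined. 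It remains to rule out $\mathcal{D}(x^{k+1}) = I$, i.e. $x^{k+1} > 0$ strictly (note $\textbf{sign}(x^{k+1}) = (1,\dots,1)^\top$ means every component is strictly positive). Here is where the conditions $\mathcal{N}(A^\top - I) = \textbf{span}(v)$ with $v > 0$ and $v^\top b < 0$ enter: multiplying $(A - \mathcal{D}(x^k)) x^{k+1} = b$ on the left by $v^\top$ and using $v^\top A = v^\top$ gives
\[
v^\top b = v^\top x^{k+1} - v^\top \mathcal{D}(x^k) x^{k+1}.
\]
If we had $x^{k+1} > 0$, then $\mathcal{D}(x^{k+1}) = I$, and I would combine this with the structure of $\mathcal{D}(x^k)$ to derive $v^\top b \geq 0$, contradicting $v^\top b < 0$. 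Concretely, when $x^{k+1} > 0$ one has $\mathcal{D}(x^k) x^{k+1} \leq x^{k+1}$ componentwise (since the diagonal of $\mathcal{D}(x^k)$ is $\leq 1$ and $x^{k+1}$ is positive), so $v^\top x^{k+1} - v^\top \mathcal{D}(x^k) x^{k+1} = v^\top (I - \mathcal{D}(x^k)) x^{k+1} \geq 0$ because $v > 0$, $I - \mathcal{D}(x^k) \geq 0$ and $x^{k+1} > 0$; this yields $v^\top b \geq 0$, the desired contradiction. Hence $\mathcal{D}(x^{k+1}) \neq I$, closing the induction, and along the way we have shown each $A - \mathcal{D}(x^k)$ is an $M$-matrix.

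The main obstacle I anticipate is precisely this last exclusion step in case~\eqref{eq:condMM-2}: one has to be careful that $\mathcal{D}(x^{k+1}) = I$ genuinely means strict positivity of $x^{k+1}$ (not merely nonnegativity), and that the sign bookkeeping in $v^\top (I - \mathcal{D}(x^k)) x^{k+1} \geq 0$ is tight enough to contradict $v^\top b < 0$ — in particular one should double-check whether equality $v^\top b = 0$ could sneak in, which is ruled out only because the inequality $v^\top b < 0$ is strict while the right-hand side is $\geq 0$. Everything else (the $Z$-matrix observation, the entrywise comparison, the invocation of Lemma~\ref{thm:MMtx-2}, and well-definedness from nonsingularity of $M$-matrices) is routine.
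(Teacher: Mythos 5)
Your proposal is correct and follows essentially the same route as the paper: case \eqref{eq:condMM-1} by entrywise comparison with $A-I$ and Lemma~\ref{thm:MMtx-2}, and case \eqref{eq:condMM-2} by induction, left-multiplying $[A-\mathcal{D}(x^k)]x^{k+1}=b$ by $v^\top$ and using $v^\top(A-I)=0$, $v>0$, $I-\mathcal{D}(x^k)\ge 0$ together with $v^\top b<0$ to force $\mathcal{D}(x^{k+1})\neq I$. The only cosmetic difference is that the paper concludes directly that some entry of $x^{k+1}$ is strictly negative, while you argue by contradiction against $x^{k+1}>0$; both yield the same induction step.
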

\begin{proof}
    If $A$ satisfies \eqref{eq:condMM-1}, it follows from Lemma~\ref{thm:MMtx-2} that $A - \mathcal{D}(x^k)$ is an $M$-matrix with $\mathcal{D}(x^k) \in [-I,I]$ and thus GNM~\eqref{eq:newton} is well defined.

    If $A$ satisfies \eqref{eq:condMM-2} and $\mathcal{D}(x^0) \neq I$, then $A - \mathcal{D} (x^0) = A - I + [I-\mathcal{D}(x^0)]$ is an $M$-matrix. Since $v^\top b< 0$, one has
    $$
    v^\top [A - \mathcal{D}(x^0)] x^1 = v^\top \{(A - I) + [I-\mathcal{D}(x^0)]\}x^1 = v^\top [I - \mathcal{D}(x^0)]x^1 = v^\top b < 0,
    $$
    which implies that at least one entry of $x^1$ is strictly negative. Thus $\mathcal{D}(x^1)\neq I$ and $A-\mathcal{D}(x^1) = A - I + [I - \mathcal{D}(x^1)]$ is an $M$-matrix. In the same manner, we can recursively prove that $A-\mathcal{D}(x^k)$ is an  $M$-matrix with $k \ge 2$ and thus GNM~\eqref{eq:newton} is also well defined.
\end{proof}

We are now in the position to present the finite termination property of GNM~\eqref{eq:newton}.

\begin{Theorem}\label{thm:exist}
Let the matrix $A$ satisfy either \eqref{eq:condMM-1} or \eqref{eq:condMM-2}. If $A$ satisfies \eqref{eq:condMM-2}, assume also that $\mathcal{D}(x^0) \neq I$ and $v^\top b< 0$. Then, GNM~\eqref{eq:newton} converges to an exact solution of AVE~\eqref{eq:ave} in at most $2n+2$ iterations.
\end{Theorem}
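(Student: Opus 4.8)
The plan is to exploit the fact, established in Lemma~\ref{lem:cond}, that every iteration matrix $A - \mathcal{D}(x^k)$ is an $M$-matrix, hence has a nonnegative inverse. The signature matrices $\mathcal{D}(x^k)$ range over a finite set (at most $3^n$ diagonal $\pm1,0$ matrices, though the relevant count will be smaller), so the sequence $\{x^k\}$ takes values in a finite set of points $\{[A-D]^{-1}b : D \in \mathcal{S}\}$, and the whole game is to show the sequence cannot cycle without first hitting a fixed point, and to control how long it can wander.

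First I would set up a monotonicity/ordering argument in the spirit of Brugnano--Casulli. Write $x^{k+1} = [A - \mathcal{D}(x^k)]^{-1} b$ and compare consecutive iterates: from $(A - \mathcal{D}(x^{k+1}))x^{k+2} = b = (A - \mathcal{D}(x^k))x^{k+1}$ one gets
\[
(A - \mathcal{D}(x^{k+1}))(x^{k+2} - x^{k+1}) = (\mathcal{D}(x^{k+1}) - \mathcal{D}(x^k))x^{k+1}.
\]
The key observation is that $(\mathcal{D}(x^{k+1}) - \mathcal{D}(x^k))x^{k+1}$ has a definite sign pattern: on coordinates where the sign of $x^{k+1}$ differs from that of $x^k$, the product $(\mathcal{D}(x^{k+1})_{ii} - \mathcal{D}(x^k)_{ii})x^{k+1}_i$ is $\ge 0$ (indeed, $\mathcal{D}(x^{k+1})_{ii} x^{k+1}_i = |x^{k+1}_i| \ge \mathcal{D}(x^k)_{ii} x^{k+1}_i$ always). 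Hence the right-hand side is $\ge 0$, and since $[A - \mathcal{D}(x^{k+1})]^{-1} \ge 0$ we deduce $x^{k+2} \ge x^{k+1}$ — the iterates are eventually monotonically nondecreasing (from $k \ge 1$ on, say, after the first step fixes the relevant sign data). A bounded monotone sequence in $\mathbb{R}^n$ that lives in a finite set must be eventually constant; once $x^{k+1} = x^k$ (equivalently $\mathcal{D}(x^{k+1}) = \mathcal{D}(x^k)$) Lemma~\ref{lem:termi}-type reasoning gives that $x^k$ solves the AVE. Actually the cleaner route: once $x^{k+2} \ge x^{k+1} \ge \cdots$, the sequence of diagonals $\mathcal{D}(x^k)$ can only "increase" in the sense that once a coordinate becomes nonnegative it stays nonnegative under $x^{k+2}\ge x^{k+1}$, so each of the $n$ coordinates can flip sign at most a bounded number of times.

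Next I would convert monotonicity into the explicit bound $2n+2$. Each coordinate $i$ contributes a monotone nondecreasing real sequence $\{x^k_i\}_{k\ge1}$; the sign $\mathcal{D}(x^k)_{ii}$ of such a sequence is nondecreasing in $k$ and takes at most the values $-1, 0, +1$, so it changes at most twice per coordinate, giving at most $2n$ sign changes total after the initial steps; adding the first one or two "setup" iterations (to pass from the arbitrary $x^0$ into the monotone regime, and one final step to realize the solution) yields $2n+2$. I would organize this as: (i) show $x^k$, $k\ge1$, is coordinatewise nondecreasing; (ii) count sign-change events; (iii) between consecutive sign-change events $\mathcal{D}(x^k)$ is constant, so by Lemma~\ref{lem:termi} (or by the fixed-point identity directly) the iteration has already terminated at a solution. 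Careful bookkeeping on where the "$+2$" comes from — one iteration to enter the monotone phase and one to confirm termination — is the fiddly part.

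The main obstacle I anticipate is rigorously establishing the global monotonicity $x^{k+1} \le x^{k+2}$ starting from the arbitrary initial guess $x^0$: the sign argument on the right-hand side $(\mathcal{D}(x^{k+1}) - \mathcal{D}(x^k))x^{k+1}$ is clean, but one must be careful about the zero coordinates (where $\mathcal{D}$ is $0$) and about whether case~\eqref{eq:condMM-2} — where only $A - \mathcal{D}(x^k) \ne A - I$ is guaranteed to be an $M$-matrix, and we separately know $\mathcal{D}(x^k) \ne I$ for all $k$ — still delivers $[A - \mathcal{D}(x^{k+1})]^{-1} \ge 0$ at every step; Lemma~\ref{lem:cond} is exactly what rescues this, so the proof must invoke it at each use of nonnegativity. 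A secondary subtlety is ensuring that "eventually constant" plus "finite value set" genuinely forces termination rather than a $2$-cycle; monotonicity is what rules cycles out, so the logical order (monotonicity first, then finiteness, then Lemma~\ref{lem:termi}) must be respected.
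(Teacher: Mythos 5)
Your proposal follows essentially the same route as the paper's proof: use Lemma~\ref{lem:cond} to get $[A-\mathcal{D}(x^k)]^{-1}\ge 0$, subtract consecutive iteration identities to show the correction term $[\mathcal{D}(x^k)-\mathcal{D}(x^{k-1})]x^k\ge 0$ and hence $x^{k+1}\ge x^k$ for $k\ge 1$, so the diagonal signs are nondecreasing in $\{-1,0,1\}$ and can change at most $2n$ times (plus the initial step and the terminating step, invoking Lemma~\ref{lem:termi}), giving the bound $2n+2$. Your per-coordinate count of sign changes is just a slight repackaging of the paper's count via $m=\sum_i d_i^1$, so the argument is correct and matches the paper.
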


\begin{proof}
The proof is inspired by~\cite[Theorem~2]{Brugnano2008}. It follows from Lemma~\ref{lem:cond} that GNM~\eqref{eq:newton} is well defined. By iterative scheme~\eqref{eq:newton}, we have
\begin{equation*}
   [A - \mathcal{D}(x^k)]x^{k+1} = [A - \mathcal{D}(x^{k-1})]x^k = b, \quad k = 1,2,\cdots,
\end{equation*}
which implies
\begin{equation}\label{eq:con1}
   [A - \mathcal{D}(x^k)]x^{k+1} = [A - \mathcal{D}(x^k)]x^k + \xi^k,\quad k = 1,2,\cdots,
\end{equation}
where $\xi^k = [\mathcal{D}(x^k) - \mathcal{D}(x^{k-1})]x^k \ge 0$. In fact, if we denote ${d_i}^k$ the $i$th diagonal entry of $\mathcal{D}(x^k)$, one has
  \begin{equation*}
    \begin{cases}
    {x_i}^k > 0 \quad \Leftrightarrow \quad {d_i}^k = 1 \quad \Rightarrow \quad {d_i}^k-{d_i}^{k-1} \ge 0 \quad \Rightarrow \quad ({d_i}^k-{d_i}^{k-1}){x_i}^k \ge 0, \\
    {x_i}^k = 0  \quad \Rightarrow \quad ({d_i}^k-{d_i}^{k-1}){x_i}^k  = 0, \\
    {x_i}^k < 0 \quad \Leftrightarrow \quad {d_i}^k = -1 \quad \Rightarrow \quad {d_i}^k-{d_i}^{k-1} \le 0 \quad \Rightarrow \quad ({d_i}^k-{d_i}^{k-1}){x_i}^k  \ge 0.
    \end{cases}
  \end{equation*}
Since $[A - \mathcal{D}(x^k)]^\mathsf{-1} \ge 0$ and $\xi^k \ge 0$, it follows from~\eqref{eq:con1} that  $x^{k+1}\ge x^k$. Hence, $\mathcal{D}(x^{k+1})\ge \mathcal{D}(x^k)\, (k=1,2,\cdots)$. From the proof of Lemma~\ref{lem:termi}, if $\mathcal{D}(x^{k+1}) = \mathcal{D}(x^k)$ for some $k\ge 0$, then $x^{k+1}$ is an exact solution of AVE~\eqref{eq:ave}. If $\mathcal{D}(x^{k+1}) \ne \mathcal{D}(x^{k})$, namely, $d_i^{k+1} > d_i^k$ for some $i\in \{1,2,\cdots,n\}$ and $k\ge 1$, then $\mathcal{D}(x^{k+1}) \ne \mathcal{D}(x^{k})$ may occur at most $n - m + 1$ times, where
  $$
  m = \sum_{i=1}^n d_i^1.
  $$
In the hypothetical case that $\mathcal{D}(x^{0}) \neq \mathcal{D}(x^{1})$, $m = -n$ and $\sum_{i=1}^n {d_i}^k= k-n-1$ ($k\ge 1$), $\mathcal{D}(x^{k+1}) \ne \mathcal{D}(x^{k})$ would occur $2n+1$ times. Therefore, GNM~\eqref{eq:newton} converges to an exact solution of AVE~\eqref{eq:ave} in at most $2n+2$ iterations.
\end{proof}

As by-products of Theorem~\ref{thm:exist}, we have the following results.

\begin{Theorem}\label{thm:unique}
Let the matrix $A$ satisfy either \eqref{eq:condMM-1} or \eqref{eq:condMM-2}. If $A$ satisfies \eqref{eq:condMM-2}, assume also that $v^\top b< 0$. Then the solution of AVE~\eqref{eq:ave} exists and is unique.
\end{Theorem}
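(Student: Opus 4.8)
The plan is to obtain existence for free from Theorem~\ref{thm:exist} and to prove uniqueness by a nonnegative-inverse comparison between two arbitrary solutions. For existence: if $A$ satisfies \eqref{eq:condMM-1}, Theorem~\ref{thm:exist} applied to GNM~\eqref{eq:newton} started from \emph{any} $x^0$ already produces a solution in finitely many steps; if $A$ satisfies \eqref{eq:condMM-2}, the hypothesis $v^\top b<0$ is in force, so picking any $x^0$ with at least one nonpositive entry (hence $\mathcal{D}(x^0)\neq I$) and invoking Theorem~\ref{thm:exist} again yields a solution. Thus AVE~\eqref{eq:ave} is solvable in either case.

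For uniqueness, let $x^*$ and $y^*$ both solve \eqref{eq:ave}, so $(A-\mathcal{D}(x^*))x^*=b$ and $(A-\mathcal{D}(y^*))y^*=b$. The first thing I would establish is that $A-\mathcal{D}(w^*)$ is an $M$-matrix for every solution $w^*$. Under \eqref{eq:condMM-1} this is immediate: $A-\mathcal{D}(w^*)$ is a $Z$-matrix and $A-\mathcal{D}(w^*)\ge A-I$ because $\mathcal{D}(w^*)\le I$, so Lemma~\ref{thm:MMtx-2} applies. Under \eqref{eq:condMM-2}, I would multiply $(A-\mathcal{D}(w^*))w^*=b$ on the left by $v^\top$ and use $v^\top A=v^\top$ (from $A^\top v=v$) to get $v^\top(|w^*|-w^*)=-v^\top b>0$; since $v>0$ and $|w^*|-w^*\ge 0$, some entry of $w^*$ is strictly negative, whence $\mathcal{D}(w^*)\neq I$ and $A-\mathcal{D}(w^*)=(A-I)+(I-\mathcal{D}(w^*))$ is an $M$-matrix by \eqref{eq:condMM-2}. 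In particular $(A-\mathcal{D}(w^*))^{-1}\ge 0$.

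With that in hand, I would compare the two solutions. From $Ay^*=b+\mathcal{D}(y^*)y^*$ one gets
$$(A-\mathcal{D}(x^*))(y^*-x^*)=(A-\mathcal{D}(x^*))y^*-b=(\mathcal{D}(y^*)-\mathcal{D}(x^*))y^*,$$
and the same componentwise sign analysis used for $\xi^k$ in the proof of Theorem~\ref{thm:exist} shows $(\mathcal{D}(y^*)-\mathcal{D}(x^*))y^*\ge 0$. Multiplying by $(A-\mathcal{D}(x^*))^{-1}\ge 0$ gives $y^*\ge x^*$; interchanging the roles of $x^*$ and $y^*$ gives $x^*\ge y^*$, hence $x^*=y^*$.

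The sign bookkeeping and the identity $v^\top A=v^\top$ are routine. The one step that genuinely needs the extra hypothesis, and is therefore the crux, is the $M$-matrix claim under \eqref{eq:condMM-2}: one must rule out $\mathcal{D}(w^*)=I$, and this is exactly where $v^\top b<0$ enters. Without it a solution could live in the nonnegative orthant, $A-\mathcal{D}(w^*)=A-I$ would be singular, and the comparison step would collapse.
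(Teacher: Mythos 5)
Your proposal is correct, and while the existence part coincides with the paper (both invoke Theorem~\ref{thm:exist}, choosing $x^0$ with $\mathcal{D}(x^0)\neq I$ in the case \eqref{eq:condMM-2}), your uniqueness argument takes a genuinely different route. The paper writes $\mathcal{D}(x)x-\mathcal{D}(y)y=Q(x-y)$ for a diagonal ``slope'' matrix $Q$ with entries $q_i\in[-1,1]$ (established by a four-case sign analysis), so that two solutions satisfy $(A-Q)(x-y)=0$; uniqueness then needs only the \emph{nonsingularity} of the single matrix $A-Q$, which follows from Lemma~\ref{thm:MMtx-2} under \eqref{eq:condMM-1}, and under \eqref{eq:condMM-2} from the observation that $v^\top b<0$ forces each solution to have a negative entry, hence some $q_i<1$ and $A-Q=(A-I)+(I-Q)$ is an $M$-matrix. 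You instead avoid constructing $Q$: you show that $A-\mathcal{D}(w^*)$ is an $M$-matrix at \emph{each} solution $w^*$ (using exactly the same crux, namely that $v^\top b<0$ together with $v^\top A=v^\top$ rules out a solution in the nonnegative orthant), and then run a two-sided comparison, $(A-\mathcal{D}(x^*))(y^*-x^*)=(\mathcal{D}(y^*)-\mathcal{D}(x^*))y^*\ge 0$, reusing the $\xi^k\ge 0$ sign bookkeeping from the proof of Theorem~\ref{thm:exist} and the inverse-nonnegativity of $M$-matrices to get $y^*\ge x^*$ and, by symmetry, $x^*\ge y^*$. What each approach buys: the paper's $Q$-argument is a one-line algebraic identity once $Q$ is built and needs only nonsingularity of one matrix; yours dispenses with the auxiliary matrix $Q$ and is more uniform with the monotonicity machinery of Theorem~\ref{thm:exist}, at the price of invoking the full inverse-nonnegativity (not just invertibility) and verifying the $M$-matrix property at both solutions. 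Both proofs hinge on the same key point under \eqref{eq:condMM-2}, which you identify correctly: without $v^\top b<0$ a solution could satisfy $\mathcal{D}(w^*)=I$ (respectively $Q=I$) and the relevant matrix would degenerate to the singular $A-I$.
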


\begin{proof}
The existence of a solution is proved constructively by Theorem~\ref{thm:exist}. In the following, we prove the uniqueness.

For any two vectors $x, y\in\mathbb{R}^n$, one has
\begin{equation*}
     D(x)x - D(y)y = Q(x-y),
\end{equation*}
where $Q=\textbf{diag}(q)\in\mathbb{R}^{n\times n}$ is a diagonal matrix whose diagonal entries $q_i\in [-1,1]\,(i = 1,2,\cdots,n)$. Indeed, we have
   \begin{enumerate}
   \item[(1)] $x_i,y_i \ge 0   \quad \Rightarrow \quad \textbf{sign}(x_i) = 0~\text{or}~1,  \textbf{sign}(y_i) = 0~\text{or}~1 \quad \Rightarrow \quad q_i =1$;

   \item[(2)] $x_i,y_i < 0   \quad \Rightarrow \quad \textbf{sign}(x_i) = \textbf{sign}(y_i) =-1  \quad \Rightarrow \quad q_i =-1$;

   \item[(3)] $x_i\ge 0 > y_i   \quad \Rightarrow \quad \textbf{sign}(x_i)=1~\text{or}~0, \textbf{sign}(y_i) = -1 \quad \Rightarrow \quad -1 \le q_i < 1$;

   \item[(4)] $x_i< 0 \le y_i   \quad \Rightarrow \quad \textbf{sign}(x_i)=-1, \textbf{sign}(y_i) = 1~\text{or}~0 \quad \Rightarrow \quad -1 \le q_i < 1$.
   \end{enumerate}

Now, we assume that $x$ and $y$ are both solutions of AVE~\eqref{eq:ave}, i.e.,
$$
[A - D(x)]x = b, \quad [A - D(y)]y = b.
$$
If $A$ satisfies \eqref{eq:condMM-1}, it is obvious that $A - Q$ is an $M$-matrix.
Therefore,
\begin{equation}\label{eq:unique}
     [A - D(x)]x - [A - D(y)]y = (A - Q)(x - y) = 0,
\end{equation}
which implies $x=y$ according to the nonsingularity of $A - Q$. If $A$ satisfies \eqref{eq:condMM-2} and $v^\top b< 0$, we have
\begin{align*}
v^\top [A - D(x)]x &= v^\top [I - D(x)]x = v^\top b <0,\\
v^\top [A - D(y)]y &= v^\top [I - D(y)]y = v^\top b <0.\\
\end{align*}
Therefore, there is $i$ and $j$ such that $x_i <0$ and $y_j<0$, which implies that at lest one of the diagonal entries of $Q$ is strictly less than $1$. Thus, $A - Q = A-I + (I-Q)$ is an $M$-matrix and it follows
from~\eqref{eq:unique} that $x = y$.
\end{proof}

\begin{Remark}
The unique solvability condition \eqref{eq:condMM-1} of AVE~\eqref{eq:ave} is also studied by Wu and Guo in \cite{wu2016} and we give a new proof in Theorem~\ref{thm:unique}. In addition, Hlad\'{i}k and Moosaei in \cite{hladik2022} show that the assumption \eqref{eq:condMM-1} implies $\rho(|A^{-1}|)<1$, a sufficient condition due to Rohn et al. \cite{rohn2014}. If the condition \eqref{eq:condMM-1} holds, $A$ is an $M$-matrix and $\|A^{-1}\|<1$ implies $\rho(|A^{-1}|) = \rho(A^{-1})<1$. In general, however, $\|A^{-1}\|<1$ does not imply $\rho(|A^{-1}|) <1$ \cite{zhang2009}. Moreover, the condition \eqref{eq:condMM-1} does not imply $\|A^{-1}\|<1$ and vice versa. For example, let \cite{zhang2009}
$$
A = \left(
    \begin{array}{cc}
    1.5 &-3\\
    0& 1.5
\end{array}\right),$$
then $A-I$ is an $M$-matrix but $\|A^{-1}\| \approx 1.6095 >1$. Let
$$
A = \left(
    \begin{array}{cc}
    1.5 &-1.25\\
    0& 1.5
\end{array}\right),$$
we have $\|A^{-1}\| = 1$ and $A-I$ is an $M$-matrix. On the other hand, for the matrix satisfied $\|A^{-1}\|<1$, $A-I$ may not be a $Z$-matrix not to mention satisfying \eqref{eq:condMM-1}. For example \cite{zhang2009},
 $$
A = \left(
    \begin{array}{cc}
    1 &-0.01\\
    0.01& 1
\end{array}\right).$$
However, when $A$ is symmetric and $\|A^{-1}\| \ge 1$, it can be proved that the assumption \eqref{eq:condMM-1} does not hold.

It follows from Lemma~\ref{lem:nons} that the assumption \eqref{eq:condMM-2} does not hold whenever $\|A^{-1}\|<1$. When $\|A^{-1}\|\ge 1$, the assumption \eqref{eq:condMM-2} can hold. For instance (see Examples~\ref{ex4}-\ref{ex5} for more detail), the matrix
$$
A = \left(
  \begin{array}{cc}
   3   & -2  \\
   -2  &   3  \\
   \end{array}
   \right)
$$
satisfies the assumption \eqref{eq:condMM-2} and $\|A^{-1}\| =1$, and the matrix
$$
A = \left(
  \begin{array}{cc}
   3   & -1  \\
   -4  &   3  \\
   \end{array}
   \right)
$$
satisfies the assumption \eqref{eq:condMM-2} and $\|A^{-1}\| \approx 1.1708 > 1$.
\end{Remark}

Furthermore, we have the following results, which are different from the existing results \cite{mame2006,prok2009}.
\begin{Corollary}
Let the matrix $A$ satisfy \eqref{eq:condMM-2}. Then we have the following claims.
\begin{itemize}
  \item [(1)] If $v^\top b = 0$ and $A$ is symmetric, then a solution of AVE~\eqref{eq:ave} exists but is not unique;

  \item [(2)] If $v^\top b > 0$, then AVE~\eqref{eq:ave} has no solutions.
\end{itemize}
\end{Corollary}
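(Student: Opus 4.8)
The plan is to use the left null vector $v$ of $A-I$ to test every candidate solution. First I would record that a vector $x$ solves AVE~\eqref{eq:ave} if and only if $[A-\mathcal{D}(x)]x=b$ (since $\mathcal{D}(x)x=|x|$); left-multiplying by $v^\top$ and invoking $v^\top(A-I)=0$, which is exactly the condition $(A^\top-I)v=0$ built into \eqref{eq:condMM-2}, yields the master identity
\[
v^\top b \;=\; v^\top\bigl[I-\mathcal{D}(x)\bigr]x \;=\; \sum_{i=1}^n v_i\bigl(1-\textbf{sign}(x_i)\bigr)x_i \;=\; \sum_{\{i\,:\,x_i<0\}} 2\,v_i x_i ,
\]
where the $i$th summand vanishes whenever $x_i\ge 0$. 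Because $v>0$, the right-hand side is $\le 0$, and it is strictly negative precisely when $x$ has a negative component. Both claims of the corollary are consequences of this single identity.

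For claim (2): if $v^\top b>0$, the identity is impossible for any $x$ (its right-hand side is $\le 0$), so AVE~\eqref{eq:ave} has no solution. For claim (1): when $v^\top b=0$ the identity forces $\sum_{\{i:x_i<0\}}2v_ix_i=0$; since each such term is strictly negative, the index set must be empty, so every solution is nonnegative, hence $|x|=x$ and the solution set equals $\{x\ge 0:(A-I)x=b\}$. To see this set is nonempty I would use the general fact $\mathrm{range}(A-I)=\mathcal{N}\bigl((A-I)^\top\bigr)^\perp=\mathcal{N}(A^\top-I)^\perp=\{w:v^\top w=0\}$, which contains $b$; pick $x_0$ with $(A-I)x_0=b$. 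Now symmetry enters: $\mathcal{N}(A-I)=\mathcal{N}(A^\top-I)=\textbf{span}(v)$, and since $v>0$ the vector $x_0+tv$ has every component tending to $+\infty$, so $x_0+tv\ge 0$ for all sufficiently large $t$ while still satisfying $(A-I)(x_0+tv)=b$; such a vector solves AVE~\eqref{eq:ave}. Fixing one such nonnegative solution $x$, every $x+tv$ with $t\ge 0$ is again nonnegative and satisfies $(A-I)(x+tv)=b$, hence solves AVE~\eqref{eq:ave}, producing a whole half-line of solutions. Thus a solution exists but is not unique.

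The one delicate point is why claim (1) genuinely requires $A$ symmetric. Symmetry is precisely what promotes $v$ from a left null vector of $A-I$ to a right null vector, thereby supplying the one-parameter family $x_0+tv$ needed to slide a solution of $(A-I)x=b$ into the nonnegative orthant and, simultaneously, to generate infinitely many solutions; without symmetry the null space of $A-I$ need not contain any nonnegative vector, and both existence and non-uniqueness may fail. Apart from this, the argument only reuses the sign bookkeeping already carried out in the proofs of Theorem~\ref{thm:exist} and Theorem~\ref{thm:unique}, so I anticipate no further obstacle.
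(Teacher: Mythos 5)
Your proposal is correct and follows essentially the same route as the paper: left-multiplying $[A-\mathcal{D}(x)]x=b$ by $v^\top$ and using $v^\top(A-I)=0$ gives the sign obstruction for claim (2), and symmetry supplies $b\in\mathrm{range}(A-I)$ together with $(A-I)v=0$, so that translating a particular solution along $v>0$ into the nonnegative orthant yields existence and a one-parameter family of solutions for claim (1), exactly as in the paper's construction $x(\alpha)=u-\alpha v$. Your extra observation that every solution must be nonnegative when $v^\top b=0$ is a harmless refinement, not a different method.
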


\begin{proof}
If $v^\top b =0$ and $A$ is symmetric, then $b$ is in the range of $A - I$. Thus, there exists a vector $u$ such that $(A - I)u = b$. Let $\alpha < \min_i \frac{u_i}{v_i}$ and denote
$$
x(\alpha) = u - \alpha v,
$$
then $x(\alpha) >0$ and
$$
(A - I)x(\alpha) = (A-I)u = b.
$$
Consequently, $x(\alpha)$ is solution of AVE~\eqref{eq:ave}.

If $v^\top b > 0$, let $x$ be a solution of AVE~\eqref{eq:ave}, then we have
$$
  v^\top [A - D(x)]x = v^\top [A - I + I - D(x)]x = v^\top [I - D(x)]x = v^\top b >0,
$$
which is impossible since $x - |x|\le 0$ and $v>0$.
\end{proof}

\section{Numerical Experiments}\label{sec:numerical}
In this section, we will present five numerical examples to illustrate the theoretical analysis presented in this paper. The purpose here is to show that GNM is applicable under the  conditions in Theorem~\ref{thm:exist}, no matter $\|A^\mathsf{-1}\| < 1$, $\|A^\mathsf{-1}\| = 1$ or $\|A^\mathsf{-1}\| > 1$. In our implementations, GNM is terminated if $RES(x^k) = \left\|Ax^k - |x^k| - b \right\| \le 10^{-7}$. For the first three examples, we choose $x^0 = (1,1,\cdots,1)^\top$ as the initial point. For the last two examples, we choose $x^0 = (1,-1)^\top$. All experiments are implemented in MATLAB R2018b.

The first example is inspired by \cite{Brugnano2008}, in which we have $\|A^{-1}\| <1$ and the assumption \eqref{eq:condMM-1} is satisfied.

\begin{Example}\label{ex1}
Consider AVE~\eqref{eq:ave} with
  $$
    A = \left(
    \begin{array}{cccccc}
      7   &   -2   &    0   & \cdots &    0   &   0    \\
     -2   &    7   &   -2   & \cdots &    0   &   0    \\
      0   &   -2   &    7   & \cdots &    0   &   0    \\
   \vdots & \vdots &        & \ddots & \vdots & \vdots \\
      0   &    0   & \cdots & \cdots &    7   &  -2    \\
      0   &    0   & \cdots & \cdots &   -2   &   7    \\
    \end{array}
    \right)\in\mathbb{R}^{n\times n}\quad \text{and} \quad b = Ax^* - |x^*|,
  $$
where ${x_i}^* = e^{6\frac{i-1}{n-1}-5}-1\,(i = 1,2,\cdots,n)$.

Numerical results are reported in Table~\ref{tab1} (where `IT' denotes the number of iterations). It follows from Table~\ref{tab1} that GNM converges to the exact solution in at most three iterations for different $n$.

\setlength{\tabcolsep}{6.0pt}
\begin{table}[htb]
\centering
\caption{Numerical results for Example \ref{ex1}.}
 \begin{tabular}{ccc}\hline
	\toprule
	    $n$       &    IT     &    RES             \\
	\midrule
	  $2000$     &   $3$    &  $2.6634e-14$   \\
	  $4000$     &   $3$    &  $3.6610e-14$      \\
	  $6000$     &   $2$    &  $4.5385e-14$      \\
      $8000$     &   $3$    &  $5.1692e-14$      \\
      $10000$    &   $2$    &  $5.7936e-14$     \\
	\bottomrule
	\end{tabular}
	\label{tab1}
\end{table}

\end{Example}

When the matrix $A$ satisfies the assumption \eqref{eq:condMM-1} and $\|A^\mathsf{-1}\| \ge  1$, we have the following two examples.

\begin{Example}\label{ex2}
Consider AVE~\eqref{eq:ave} with
 $$
  A = \left(
  \begin{array}{cc}
   1.5  &-1.25  \\
   0    &  1.5  \\
   \end{array}
   \right) \quad \text{and} \quad
   b = \left(
        \begin{array}{c}
          4  \\
          16 \\
       \end{array}
       \right).
  $$
 Note that $\|A^\mathsf{-1}\|=1$. Moreover, the unique solution of the AVE~\eqref{eq:ave} is $x^* = (88,32)^\top$ and GNM converges to the exact solution at the first iteration.
\end{Example}

\begin{Example}\label{ex3}
Consider AVE~\eqref{eq:ave} with
  $$
   A = \left(
   \begin{array}{cc}
    1.5  &-3  \\
    0    &  1.5  \\
   \end{array}
   \right) \quad \text{and} \quad
    b = \left(
        \begin{array}{c}
          -2  \\
          -3 \\
       \end{array}
       \right).
  $$
Note that $\|A^\mathsf{-1}\|\approx 1.6095 >1$. Moreover, the unique solution of the AVE~\eqref{eq:ave} is $x^* = (-2.24,-1.2)^\top$. The sequence generated by GNM converges to the $x^*$ at the second iteration.
\end{Example}

Before concluding this section we give two examples with the matrix $A$ satisfying the assumption~\eqref{eq:condMM-2} and $v^\top b <0$.

\begin{Example}\label{ex4}
Consider AVE~\eqref{eq:ave} with
  $$
  A = \left(
  \begin{array}{cc}
   3   &-2  \\
   -2  &   3  \\
   \end{array}
   \right) \quad \text{and} \quad
   b = \left(
        \begin{array}{c}
          -4  \\
          -16 \\
       \end{array}
       \right).
  $$
For this example, we have  $\|A^\mathsf{-1}\| = 1$, $v=(1,1)^\top$, and $v^\top b = -20<0$. GNM converges to the exact solution $x^* = (-4,-6)^\top$ at the second iteration.
\end{Example}

\begin{Example}\label{ex5}
Consider AVE~\eqref{eq:ave} with
 $$
  A = \left(
  \begin{array}{cc}
    3   &-1  \\
    -4  &   3  \\
   \end{array}
   \right) \quad \text{and} \quad
   b = \left(
        \begin{array}{c}
          -5  \\
          -4 \\
       \end{array}
       \right).
  $$
For this example, $\|A^\mathsf{-1}\| \approx 1.1708 >1$, $v=(2,1)^\top$, and $v^\top b = -14<0$. The sequence generated by GNM converges to the exact solution $x^* = (-2,-3)^\top$ at the second iteration.
\end{Example}

In conclusion, under the conditions in Theorem~\ref{thm:exist}, the above mentioned examples show that GNM converges to the unique solution of AVE~\eqref{eq:ave} within $2n +2$ iterations. In fact, it often only takes a few iterations.

\section{Conclusions}\label{sec:conclusion}
For two special classes of matrices, the finite termination property of the generalized Newton method (GNM) for the absolute value equation is further considered in this paper. Under the conditions in Theorem~\ref{thm:exist}, GNM is finitely convergent for $\|A^\mathsf{-1}\|>0$, which is weaker than $ \|A^\mathsf{-1}\|< \frac{1}{3}$ (commonly used in the literature). In addition, a new proof of an existing unique solvability condition is given and a new unique solvability condition and a new unsolvability condition are developed. Numerical results demonstrate our claims.

\end{document}